\newtheorem{theorem}{Theorem}
\newtheorem{proposition}[theorem]{Proposition}
\newtheorem{lemma}[theorem]{Lemma}
\begin{document}
\title{A family of cyclic quartic fields with explicit fundamental units}
\author{Steve Balady}
\address{Department of Mathematics,
University of Maryland,
College Park, MD 20742}
\email{sbalady@math.umd.edu} 
\author{Lawrence C. Washington}
\address{Department of Mathematics,
University of Maryland,
College Park, MD 20742}
\email{lcw@math.umd.edu}

\begin{abstract} We construct a family of quartic polynomials with cyclic Galois group and show that the roots of the 
polynomials are fundamental units or generate a subgroup of index 5.\end{abstract}
\maketitle
The goal of this paper is to prove the following.
\begin{theorem}\label{main} Let $s$ be an integer such that $3s^2-4s+4$ is a square.
Let $K_s$ be the splitting field of 
$$
F_s(t)= t^4 + (4s^3 - 4s^2 + 8s - 4)t^3 + (-6s^2 - 6)t^2 + 4t + 1,
$$
 Then $\text{Gal}(K_s/\mathbb Q)$ is cyclic of order 4.
If  $s^2+2$ is squarefree and $s\ne 0$, then  $\pm 1$ and the roots of $F_s(t)$  generate either the unit group
of the ring of algebraic integers in $K_s$ or a subgroup of index 5. 
\end{theorem}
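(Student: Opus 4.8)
The plan is to fix a root $\theta$ of $F_s$ and the cyclic generator $\sigma$ of $\mathrm{Gal}(K_s/\mathbb{Q})$ given by the first part of the theorem, and to write the four roots as $\theta_i=\sigma^i\theta$ for $i=0,1,2,3$. First I would note that they are units: since $F_s$ is monic over $\mathbb{Z}$ each $\theta_i$ is an algebraic integer, and the constant term $1$ gives $\prod_i\theta_i=N_{K_s/\mathbb{Q}}(\theta)=1$, so each $\theta_i\in\mathcal{O}_{K_s}^{\times}$. The large coefficient $4s^3-4s^2+8s-4$ forces one root $\theta_0\approx-(4s^3-4s^2+8s-4)$; analyzing the remaining cubic factor shows the other three roots are real as well, so $K_s$ is totally real and the unit rank is $3$. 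The only evident relation among the four units is $\prod_i\theta_i=1$, so $U:=\langle-1,\theta_0,\theta_1,\theta_2,\theta_3\rangle$ has rank at most $3$, and the goal is to prove that it has rank exactly $3$ with $I:=[\mathcal{O}_{K_s}^{\times}:U]\in\{1,5\}$.

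The central computation is the regulator $R'$ of $U$. I would approximate the four real roots as functions of $s$ by perturbing the dominant balance above, and then evaluate the $3\times3$ regulator determinant of the numbers $\log|\theta_i|$ by exploiting the cyclic action: the Frobenius (group-determinant) formula factors $R'$, up to an explicit nonzero rational constant, as $\prod_{\psi\ne1}\bigl(\sum_i\psi(\sigma^i)\log|\theta_i|\bigr)$ over the nontrivial characters $\psi$ of $\mathbb{Z}/4\mathbb{Z}$. This yields a closed form for $R'$ as a function of $s$ and, since none of the character sums vanish, simultaneously proves that $\theta_0,\theta_1,\theta_2$ are multiplicatively independent; hence $U$ has full rank $3$ and $I$ is finite. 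The outcome should be $R'\asymp(\log s)^3$.

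To bound $I=R'/R$ I would follow the strategy used for the simplest cubic fields and compare $R'$ with a lower bound for the regulator $R$ of a totally real quartic field of discriminant $d_{K_s}$ (a bound of the type obtained by Cusick for totally real fields, of the expected order $(\log d_{K_s})^3$). The conductor--discriminant formula, with $s^2+2$ squarefree pinning down the conductors of the quadratic and quartic characters of $K_s$, makes $d_{K_s}$ explicit, so $\log d_{K_s}\asymp\log s$ and the lower bound for $R$ is of the same order $(\log s)^3$ as $R'$. Making the constants sharp should bound $I$ uniformly and, I expect, force $I<6$. (The analytic class number formula $hR=\tfrac{1}{8}\sqrt{d_{K_s}}\,L(1,\chi_2)\,|L(1,\chi_4)|^2$ then records the class number but, as in the cubic case, is not what caps the index.)

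The last and hardest step is to exclude $I=2,3,4$, leaving $I\in\{1,5\}$. Here I would argue that a prime $p\mid I$ yields a unit $\eta$ with $\eta^p\in U$ but $\eta\notin U$, and then reduce modulo carefully chosen primes---or use the explicit minimal polynomial of $\theta$---to show $2\nmid I$ and $3\nmid I$; this removes $I=2,3,4$ and leaves $I\in\{1,5\}$, the surviving prime $5$ reflecting a special feature of the construction. I expect this to be the main obstacle, in two respects: the regulator comparison must be numerically sharp \emph{uniformly in} $s$ in order to cap $I$ below $6$, and the divisibility analysis must isolate exactly the prime $5$.
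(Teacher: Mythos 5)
Your outline (the roots are units of norm one; compute the sub-regulator $R'$ via the group-determinant factorization over the characters of $\mathbb Z/4\mathbb Z$; compare with a lower bound for the true regulator $R$; then exclude small indices) matches the paper's strategy in broad strokes, but two of your steps have genuine gaps. First, the regulator comparison: an absolute Cusick-type bound $R\gg(\log D_{K_s})^3$ is very unlikely to be numerically sharp enough to cap the index. The paper instead proves a \emph{relative} bound (Proposition \ref{RvsD}): with $\epsilon$ the fundamental unit and $d_{k_s}$ the discriminant of the quadratic subfield $k_s$, one has $\frac14\log^2\bigl(D_{K_s}/(16\,d_{k_s}^2)\bigr)\le R_{K_s}/\log\epsilon$, proved by showing that $E_{K_s}/E_{k_s}$ is a free rank-one $\mathbb Z[i]$-module with generator $\eta$ and bounding the relative different by $\eta-\sigma^2\eta$. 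Since $D_{K_s}/(16\,d_{k_s}^2)=s^2+2$ and $R'/\log\epsilon<9\log^2|s|$, this yields only $[E:U]<9$, not $<6$; so you must also exclude the indices $6$, $7$, $8$, and in particular $7$ cannot be reached by showing $2\nmid I$ and $3\nmid I$. (You also need $s\ne0$ and $s^2+2$ squarefree at this stage to identify $\epsilon=-r_1r_3$ as the fundamental unit of $k_s$; this is exactly where those hypotheses enter.)

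Second, and more seriously, your plan for excluding $I=2,3,4$ by ``reducing modulo carefully chosen primes'' has no uniform-in-$s$ content and is the step most likely to fail. The paper's argument is structural: since $\overline E=E/E_{k_s}\simeq\mathbb Z[i]$ and $\overline U$ is $\sigma$-stable, hence an ideal of $\mathbb Z[i]$, the index $[E:U]$ must be a norm from $\mathbb Z[i]$, which kills $3$, $6$, $7$ outright; the remaining cases $2$, $4$, $8$ are killed because each would produce $\eta$ and $\delta\in E_{k_s}$ with $\pm1$ or a square in $E_{k_s}$ equal to $r_1r_3\delta^2=-\epsilon\delta^2$, contradicting that $\epsilon$ is the fundamental unit of $k_s$. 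The prime $5$ survives precisely because $5=N(2+i)$ is a norm from $\mathbb Z[i]$ and the norm-to-$k_s$ obstruction is vacuous for the corresponding ideals, not because of a special feature of the polynomial itself. Without this $\mathbb Z[i]$-module idea (or an equally uniform substitute), your proposal does not close.
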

Computational evidence (see the end of Section \ref{fundamental}) indicates that the index 5 case does not occur, but we have not yet proved this.

Families of cyclic quartic fields with explicit units have been studied in the past 
(for example, \cite{Gras}, \cite{Lecacheux1}, \cite{Lecacheux2}), but it does not
seem that this
family has been studied previously. In \cite{Balady}, families of cyclic cubic fields were constructed and the method led 
naturally to studying integral points on a model of the elliptic modular surface
$X(3)$. In the present case, we are led to study integral points on a degree 4 cover of the surface $X(2)$,
but the Diophantine properties are not as transparent. The family we study lives above a singular fiber of
$X(2)$. It would be interesting to know if there are other families living as curves on the surface.

\section{The polynomials}

As in \cite{Balady}, we start with the action of the Galois group given by a linear fractional transformation, 
but this time we take the matrix to have order 4 in 
$\text{PGL}_2(\mathbb Z)$. Let $f, g$ be integers and let
$$
M=\begin{pmatrix} f & -1\\ \frac{f^2+g^2}{2} & -g\end{pmatrix}.
$$
Then 
$$
M^2=\frac{f-g}{2}\begin{pmatrix} f+g & -2\\ f^2+g^2 & -g-f\end{pmatrix},\quad 
M^3=\frac{(f-g)^2}{2} \begin{pmatrix} g & -1\\ \frac{f^2+g^2}{2} & -f\end{pmatrix}.
$$
The action of the Galois group is to be $\theta \mapsto M\theta$. We want $\theta$ to be a unit,
so we take it to have norm 1:
\begin{equation}\label{norm1}
\theta\cdot \frac{f\theta -1}{\frac{f^2+g^2}{2}\theta - g}\cdot \frac{(f+g)\theta -2}{(f^2+g^2)\theta - g-f}\cdot
\frac{g\theta - 1}{\frac{f^2+g^2}{2}\theta - f}=1.
\end{equation}
If $fg(f+g)\ne 0$, this relation can be rearranged to say that $\theta$ is a root of
\begin{align}\label{fgpoly}
t^4 -& \frac{(f^2+g^2)^3+4(f^2+g^2)+16fg}{4fg(f+g)}t^3 \\
&+\frac{3((f^2+g^2)^2+4)}{4fg}t^2 - \frac{(f+g)^4-4f^2g^2+4}{2fg(f+g)}t + 1 = 0.
\end{align}
Since this is symmetric in $f$ and $g$, we make the substitutions $s=f+g$ and $p=fg$ to obtain
$$
t^4-\frac{(s^2-2p)^3+4s^2+8p}{4sp}t^3+\frac{3((s^2-2p)^2+4)}{4sp}t^2-\frac{s^4-4p^2+4}{2sp}+1.
$$
Let $L=-\frac{s^4-4p^2+4}{4sp}$. The polynomial becomes
\begin{equation}\label{polywithL}
t^4+(2s^3+Ls^2-4ps+2Lp)t^3+(-3s^2-3Ls+6p)t^2+2Lt+1.
\end{equation}
Therefore, if $L\in \mathbb Z$, or if $2L\in \mathbb Z$ and $s$ is even, the roots of the polynomial are units.
Of course, since $s=f+g$ and $p=fg$, there is the extra condition that $s^2-4p$ is a square.
\medskip

\noindent
{\bf Remark.} The divisibility condition $4sp\mid s^4-4p^2+4$ that makes $L$ integral is the same type of condition that appears
in the cubic case, where the paper \cite{Balady} has the condition $fg\mid f^3+g^3+1$. These both seem to be analogues
of the condition $r\mid 4n$ for the Richaud-Degert real quadratic fields $\mathbb Q(\sqrt{n^2+r})$ (see \cite{MollWill}).
\medskip

Note that changing $s$ to $-s$ while holding $p$ constant corresponds to changing the signs of $f$ and $g$. 
This has the effect of
$$
L\mapsto -L, \quad t\mapsto -t
$$
in (\ref{polywithL}). Therefore, we can for simplicity assume that $L>0$ (the case $L=0$ cannot occur).

The family considered in \cite{Gras} (the ``simplest quartic fields'') corresponds to the matrix $M$ with $f=1$ and $g=-1$.
However, the relation (\ref{norm1}) becomes trivial in this case and does not yield the polynomial defining the family.

\section{The surface}

When the analogous construction was done for the cubic case in \cite{Balady}, the expression
for one of the coefficients yielded an equation for $X(3)$. In the present case,
we are looking for integral points $(f, g, L)$ on the surface
$$
X:\quad (f+g)^4-4f^2g^2+4+2Lfg(f+g)=0.
$$
This is a double cover of the surface
$$
s^4-4p^2+4+2Lps=0,
$$
which can be transformed (see, for example, \cite{Connell}) to
$$
y^2=(x+4)(x-4)(x+L^2).
$$
Therefore, $X$ is a degree 4 cover of the elliptic surface $X(2)$, whose fibers are Legendre elliptic curves.
The bad fibers $L=\pm 2$ are the ones that play a role in what follows.

A computer search produced several pairs $(f, g)$ for which $L$ is integral, and almost all of them
had $L=\pm 2$. 

\begin{table}[h]
\begin{tabular}{c|c|c|c|c|c}
$f$ & $g$ & $s$ & $p$ & $L$ & polynomial\\
\hline
$1$ & $-5$ & $-4$ & $-5$ & $-2$ & $t^4 - 220t^3 - 102t^2 - 4t + 1$\\
5 & $-17$ & $-12$ & $-85$ & 2 & $t^4 - 7588t^3 - 870t^2 + 4t + 1$\\
5 & $-37$ & $-32$ & $-185$ & $-77/2$ & $t^4 - 114395t^3 - 7878t^2 - 77t + 1$\\
17 & $-65$ & $-48$ & $-1105$ & $-2$ & $t^4 - 433532t^3 - 13830t^2 - 4t + 1$\\
65 & $-241$ & $-176$ & $-15665$ & $2$ & $t^4 - 21932420t^3 - 185862t^2 + 4t + 1$
\end{tabular}
\end{table}
As mentioned above, a simple transformation changes the examples with $L=-2$ into examples with $L=2$.

\section{A family of fields}

From now on, we make the assumption:
$$
L= 2.
$$
Since $s^4-4p^2+4Lps+4=0$ is a quadratic in $p$, the quadratic formula yields
$$
p= \frac{Ls}{2} \pm \frac{1}{2}(s^2+2).
$$
If $p=s+\frac12(s^2+2)$, the polynomial in Equation (\ref{polywithL}) becomes $(t+1)^4$, so we always take
\begin{equation}\label{spequ}
p=s-\frac12(s^2+2).\end{equation}
We now have the polynomial
\begin{equation}\label{ourpoly}
F_s(t)= t^4 + (4s^3 - 4s^2 + 8s - 4)t^3 + (-6s^2 - 6)t^2 + 4t + 1,
\end{equation}
with the side condition that 
\begin{equation}\label{square}
3s^2-4s+4 \text{ is a square }
\end{equation}
(this is a rewriting of $s^2-4p$).
This condition immediately implies the following:
$$
s \text{ is even}, \quad s^2+2\equiv 2\pmod 4, \quad p\text{ is odd},
$$
where the last follows from (\ref{spequ}). Since $p$ is odd, we must have
$$
f \text{ and } g \text{ are odd}.
$$

The original parameters $f$ and $g$ are given by
\begin{equation}\label{fg}
f, g=\frac{s\pm \sqrt{3s^2-4s+4}}{2}.
\end{equation}
The choice of which is $f$ and which is $g$ does not have much significance,
but it affects the choice of generator of the Galois group in the following.

For future reference, we note the following consequence of $L= 2$:
\begin{lemma} \label{fglemma} If $L=2$, then
$$
\frac{s^2+2}{2} = \left(\frac{f+1}2\right)^2 +\left(\frac{g+1}2\right)^2, \qquad (f-g)^2=3s^2-4s+4.
$$
\end{lemma}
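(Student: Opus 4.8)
The plan is to derive both identities as direct algebraic consequences of the defining relation (\ref{spequ}), namely $p = s - \tfrac{1}{2}(s^2+2)$, combined with $f+g=s$ and $fg=p$. Neither identity requires the squareness condition (\ref{square}): that condition enters only through the existence of integer $f,g$ via (\ref{fg}), not through the identities themselves. Both identities are symmetric in $f$ and $g$, so the ambiguity in (\ref{fg}) as to which root is $f$ and which is $g$ is harmless.

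For the second identity I would begin from $(f-g)^2 = (f+g)^2 - 4fg = s^2 - 4p$ and substitute (\ref{spequ}):
$$
s^2 - 4\left(s - \tfrac{s^2+2}{2}\right) = s^2 - 4s + 2(s^2+2) = 3s^2 - 4s + 4.
$$
In fact this half is already noted in passing in the excerpt, where $3s^2-4s+4$ is identified as the rewriting of $s^2-4p$.

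For the first identity I would expand the right-hand side as
$$
\left(\tfrac{f+1}{2}\right)^2 + \left(\tfrac{g+1}{2}\right)^2 = \frac{(f^2+g^2) + 2(f+g) + 2}{4},
$$
then replace $f^2+g^2$ by $s^2-2p$ and $f+g$ by $s$ to obtain $\tfrac{1}{4}(s^2 - 2p + 2s + 2)$. Substituting (\ref{spequ}) once more collapses the numerator to $2s^2+4$, and dividing by $4$ gives exactly $\tfrac{s^2+2}{2}$.

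Since everything reduces to a single substitution into two elementary symmetric-function identities, there is no genuine obstacle here; the only point demanding care is the choice of sign in the quadratic formula for $p$. One must use the root $p = s - \tfrac12(s^2+2)$ rather than $p = s + \tfrac12(s^2+2)$, the latter having already been discarded before (\ref{spequ}) because it degenerates $F_s$ to $(t+1)^4$. With that choice fixed, both identities follow immediately.
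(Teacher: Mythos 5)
Your proposal is correct and follows essentially the same route as the paper: the first identity is verified by exactly the same expansion to $\tfrac14(s^2-2p+2s+2)$ followed by the substitution $p=s-\tfrac12(s^2+2)$, and your derivation of $(f-g)^2=s^2-4p=3s^2-4s+4$ is just an unwinding of the paper's appeal to Equation (\ref{fg}), which encodes the same quadratic-formula discriminant. The observation that the squareness condition is not needed for the identities themselves is accurate but does not change the argument.
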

\begin{proof} The right side of the first equation is
$$
\frac{1}4\left(s^2-2p+2s+2\right)=\frac{1}4\left(2s^2+4\right)=\frac12\left(s^2+2\right),
$$
where the first equality uses Equation (\ref{spequ}).  The second equation follows from Equation (\ref{fg}). \end{proof}

Suppose now that  Equation (\ref{square}) holds. 
Then
$$
v^2=3s^2-4s+4,
$$
for some $v$,
which forces $v=2w$ and $s=2u$ for some $u, w$. This yields
$$
(3u-1)^2-3w^2=-2.
$$
The solutions are given by
\begin{equation}\label{Pell}
(3u-1)\pm w\sqrt{3}=(-1)^{n+1} (1+\sqrt{3})(2+\sqrt{3})^n,
\end{equation}
with $n\in \mathbb Z$ (the first choice of signs is arbitrary; the second sign is chosen 
in order to make the right side congruent to $-1$ mod $\sqrt{3}$).

This gives the following values of $s$:
$$
 4, \quad  -12, \quad  48, \quad  -176, \quad  660, \quad  -2460, \quad  9184, \quad  -34272.
$$
Every third value (4, $-176$, 9184, \dots) has $s^2+2$ divisible by $3^2$. The other values
listed yield squarefree values of $s^2+2$, although it is not known whether the sequence yields
infinitely many values of $s$ such that $s^2+2$ is squarefree. Questions of this type seem similar
to questions about squarefree Mersenne numbers, most of which are unsolved. For the first 100 values of $s$ (that is, for
$s$ arising from $1\le n\le 100$ in Equation (\ref{Pell})),
all values of $s^2+2$ or $(s^2+2)/9$ are squarefree.

The discriminant of the polynomial $F_s(t)$ in Equation (\ref{ourpoly}) is
\begin{equation}\label{disc}
256(3s^2-4s+4)^3(s^2+2)^3.
\end{equation}
Since $3s^2-4s+4$ is a square, the discriminant is a square times $s^2+2$. But $s^2+2$ is never a square,
so $k_s=\mathbb Q(\sqrt{s^2+2})\subseteq K_s$. Therefore, once we show that the Galois group is cyclic, we know
that $k_s$ is the unique quadratic subfield.

We first show that $F_s$ is irreducible, then identify the Galois action.

\begin{lemma} \label{roots} Let $|s|\ge 3$. The roots of $F_s(t)$ satisfy
\begin{gather*}
 r_1=-4s^3+4s^2-8s+4-(3/2)s^{-1}-(3/2)s^{-2}+ \theta_1 s^{-4}, \text{ with } 1\le \theta_1 \le 2,\\
r_2 = \frac{1+\sqrt{3}}{2} s^{-1} +\frac{3+\sqrt{3}}{6}s^{-2} -\frac{1}{3\sqrt{3}}s^{-3} + \theta_2 s^{-4}, \text{ with } -3/2 \le \theta_2\le -1/2,\\
r_3= (1/2)s^{-1} +(1/2)s^{-2} - \theta_3 s^{-4}, \text{ with } 0\le \theta_3\le 1,\\
r_4 = \frac{1-\sqrt{3}}{2} s^{-1} +\frac{3-\sqrt{3}}{6}s^{-2} +\frac{1}{3\sqrt{3}}s^{-3} + \theta_2 s^{-4}, \text{ with } -1/2 \le \theta_2\le 1/2.
\end{gather*}
\end{lemma}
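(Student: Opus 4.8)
The plan is to treat $s$ as a real parameter with $|s|\ge 3$ and to locate all four roots by the intermediate value theorem, after first exposing their structure by a rescaling. The key observation is that three of the roots have size $O(s^{-1})$ while the fourth has size $\Theta(s^3)$. For the three small roots I would substitute $t=x/s$ and expand
\[
\Phi_s(x):=F_s(x/s)=(4x^3-6x^2+1)+(-4x^3+4x)s^{-1}+(8x^3-6x^2)s^{-2}-4x^3s^{-3}+x^4s^{-4}.
\]
Thus, for large $|s|$, $\Phi_s$ is a small perturbation of the fixed cubic $\phi(x)=4x^3-6x^2+1$, whose three roots are exactly $\tfrac{1+\sqrt3}{2},\ \tfrac12,\ \tfrac{1-\sqrt3}{2}$ — precisely the leading coefficients $x_2,x_3,x_4$ in the lemma. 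Since $\phi'(x)=12x(x-1)$ is nonzero at each of these (they are simple roots), the standard perturbation of a simple root yields, for each $i\in\{2,3,4\}$, an expansion $s\,r_i=x_i+c_{i,1}s^{-1}+c_{i,2}s^{-2}+\cdots$ reproducing the stated coefficients of $s^{-2}$ and $s^{-3}$.

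To turn this into the rigorous statement with explicit remainder intervals, I would use bracketing. For each of $r_2,r_3,r_4$, substitute the two endpoint values of $\theta_i$ from the stated interval into the displayed expansion, obtaining explicit reals $\ell_i(s)<u_i(s)$ (lying in $\mathbb Q(\sqrt3)$ for $i=2,4$). One then evaluates $F_s$ at each endpoint and expands as a Laurent polynomial in $s^{-1}$. Because $x_i$, $c_{i,1}$, $c_{i,2}$ are the correct expansion coefficients through order $s^{-3}$, the orders $s^{-1}$ and $s^{-2}$ cancel, and the leading surviving term is of order $s^{-3}$ with coefficient $\phi'(x_i)\bigl(\theta_i-\theta_i^{\mathrm{true}}\bigr)$ up to sign; evaluating this coefficient explicitly at the two endpoints shows $F_s(\ell_i)$ and $F_s(u_i)$ have opposite signs for all $|s|\ge 3$. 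The intermediate value theorem then places a root in each interval $(\ell_i,u_i)$. Since $x_2,x_3,x_4$ are distinct and the fourth root has size $\Theta(s^3)$, the four intervals are pairwise disjoint once $|s|\ge 3$; as $\deg F_s=4$, these account for all the roots, which are therefore real, simple, and located as claimed.

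For the large root $r_1$ I would avoid evaluating $F_s$ directly, since that forces one to unwind an order-$s^{12}$ cancellation. Instead, Vieta's relation $\sum_i r_i=-(4s^3-4s^2+8s-4)$ gives
\[
r_1=-4s^3+4s^2-8s+4-(r_2+r_3+r_4).
\]
Summing the three small-root expansions, the $\sqrt3$-terms cancel, yielding $r_2+r_3+r_4=\tfrac32 s^{-1}+\tfrac32 s^{-2}+0\cdot s^{-3}+O(s^{-4})$, which reproduces the $s^{-1},s^{-2}$ coefficients of $r_1$ and the absence of an $s^{-3}$ term. The sharp interval $1\le\theta_1\le2$ requires the $s^{-4}$ coefficient of $r_2+r_3+r_4$; crude interval arithmetic only gives $0\le\theta_1\le3$, so I would pin it down either by bracketing $r_1$ directly via the intermediate value theorem at $\theta_1=1$ and $\theta_1=2$, or by carrying the small-root expansions one order further and using their exact dependence on $s$.

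I expect the main obstacle to be the uniformity of these sign determinations down to the threshold $|s|=3$: at each endpoint the leading $s^{-3}$ contribution must be shown to dominate every lower-order term, and $3$ is small enough that the estimates are genuinely tight rather than merely asymptotic. The bookkeeping is heaviest for $r_2$ and $r_4$, where the endpoints and evaluations live in $\mathbb Q(\sqrt3)$ and one must track the sign of terms such as $\pm\tfrac{1}{3\sqrt3}s^{-3}$, and — if one brackets it directly — for $r_1$, where the top-degree cancellation must be made explicit. I would reduce each check to finitely many polynomial inequalities in $s$, verify them for $|s|\ge 3$, and confirm the algebra with a computer algebra system. As a structural sanity check, the action $\theta\mapsto M\theta$ by the order-$4$ matrix $M$ permutes the roots, and a short computation shows that the leading terms of $Mr_1,M^2r_1,M^3r_1$ are exactly $\tfrac{1+\sqrt3}{2}s^{-1},\ \tfrac12 s^{-1},\ \tfrac{1-\sqrt3}{2}s^{-1}$; this identifies which small root is which and in principle offers an alternative route via propagating the expansion of $r_1$ through the fractional linear maps.
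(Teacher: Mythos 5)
Your proposal is correct and follows essentially the same route as the paper: bracket each root between the endpoint values of $\theta_i$, evaluate $F_s$ there, reduce the required sign changes for all $|s|\ge 3$ to finitely many polynomial sign conditions in $s$ verified by computer (the paper locates all real roots of the resulting degree-$21$ polynomials and checks the leading coefficient), and conclude by the intermediate value theorem. Your rescaling $t=x/s$ and the perturbation of the simple roots of $4x^3-6x^2+1$ is a nice way to derive the expansion coefficients (the paper obtained them numerically from $s=10^{100}$), and your Vieta detour for $r_1$ is, as you note, insufficient for the sharp interval, so you fall back on the same direct bracketing the paper performs.
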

\begin{proof}
Let $\overline{r}_1=-4s^3+4s^2-8s+4-(3/2)s^{-1}-(3/2)s^{-2}$. Substitute $\overline{r}_1+s^{-4}$  into $F_s(t)$.
The result is a degree 21 polynomial $P_1(s)$ divided by $s^{16}$. The real roots of $P_1(s)$ all have absolute value less than 2.1,
and $P_1(s)$ has a positive top coefficient. Since $P_1(s)$ is positive for large $s$ and does not change sign in the interval $(2.1, \infty)$,
we see that $P_1(s)>0$ for $s\ge 3$. Similarly, $P_1(s)<0$ when $s\le -3$. 

Now  substitute $\overline{r}+2s^{-4}$
into $F_s(t)$. The result is a degree 21 polynomial $Q_1(s)$ divided by $s^{16}$. 
The real roots of $Q_1(s)$ all have absolute value less than 1,
and $Q_1(s)$ has a negative top coefficient. 
It follows that $Q_1(s)<0$ for $s\ge 1$ and $Q_1(s)>0$ when $s\le -1$.

Fix $s$ with $s\ge 3$. Then $F_s(\overline{r}_1+s^{-4})>0>F_s(\overline{r}_1+2s^{-4})$.
Therefore, there is a zero $r_1$ of $F_s(t)$ that satisfies the stated conditions.
The case where $s\le -3$ is similar.

The proofs for $r_2, r_3, r_4$ are similar.\end{proof}

These expansions of $r_1$ and $r_3$ were found by letting $s=10^{100}$ and finding the roots of $F_s(t)$ numerically.
The coefficients of the expansions were then easy to deduce from the decimal expansions of the roots.
For $r_2$ and $r_4$, the expansions of $r_2+r_4$ and $(r_2-r_4)/\sqrt{3}$ had simple forms, and the above were obtained
from these. 

If we take $f=\left(s+\sqrt{3s^2-4s+4}\right)/2$ and $g=\left(s-\sqrt{3s^2-4s+4}\right)/2$, then the linear fractional transformation $M$ maps
$r_j$ to $r_{j+1}$. Given the approximation to $r_1$, we could obtain the other approximations 
from the action of $M$, but estimating the error terms would be harder.

\begin{lemma} Let $s\in \mathbb Z$. Then $F_s(t)$ is irreducible in $\mathbb Q[t]$.
\end{lemma}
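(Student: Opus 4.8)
The plan is to rule out each possible factorization type over $\mathbb{Q}$ in turn. Since $F_s(t)$ is monic with integer coefficients, by Gauss's lemma any factorization in $\mathbb{Q}[t]$ comes from one in $\mathbb{Z}[t]$, so a hypothetical proper factor has integer coefficients. There are three cases to exclude: a linear factor (equivalently, a rational root), a product of two irreducible quadratics, and a product of a linear and an irreducible cubic (which is subsumed by the rational-root case). Because $F_s$ is reciprocal-looking only in its constant term $+1$, the possible rational roots are $\pm 1$, and I would first check directly that $F_s(1)$ and $F_s(-1)$ are nonzero for all integers $s$: a quick computation gives $F_s(1) = 4s^3 - 4s^2 + 2s - 4$ and $F_s(-1) = -4s^3 + 4s^2 - 12s + 6$ (up to sign/arithmetic to be verified), neither of which vanishes at an integer $s$, so there is no linear factor.

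The main case is a factorization into two monic integer quadratics, $F_s(t) = (t^2 + at + b)(t^2 + ct + d)$ with $a,b,c,d \in \mathbb{Z}$. Comparing coefficients gives the system $a + c = 4s^3 - 4s^2 + 8s - 4$, $b + d + ac = -6s^2 - 6$, $ad + bc = 4$, and $bd = 1$. From $bd = 1$ we get $(b,d) = (1,1)$ or $(b,d) = (-1,-1)$. In the case $(b,d)=(1,1)$, the relation $ad + bc = 4$ becomes $a + c = 4$, which contradicts $a + c = 4s^3 - 4s^2 + 8s - 4$ except for the finitely many $s$ solving $4s^3 - 4s^2 + 8s - 4 = 4$; these can be checked by hand to fail the condition that $3s^2 - 4s + 4$ be a square (or to contradict the remaining equation $b + d + ac = 2 + ac = -6s^2 - 6$). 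In the case $(b,d)=(-1,-1)$, the relation $ad + bc = 4$ becomes $-(a+c) = 4$, i.e. $a + c = -4$, and again this forces $4s^3 - 4s^2 + 8s - 4 = -4$, a cubic with no integer roots satisfying the hypotheses. So each sub-case reduces to a finite, explicitly checkable set of exceptional $s$.

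I expect the main obstacle to be the real-quadratic factorization over $\mathbb{R}$ masquerading as a rational one: the coefficient $a + c$ is a large cubic in $s$, so one must be careful that the split of this large sum between $a$ and $c$ is genuinely forced. This is exactly where the root estimates of Lemma~\ref{roots} do the heavy lifting. The four real roots $r_1, r_2, r_3, r_4$ have wildly different magnitudes — $r_1 \approx -4s^3$ while the others are $O(s^{-1})$ — so any monic quadratic factor over $\mathbb{Z}$ must be a product of two of the $r_j$, and its middle coefficient $-(r_i + r_j)$ must be an integer. Pairing $r_1$ with any of $r_2, r_3, r_4$ gives a coefficient of size comparable to $4s^3$, which is consistent only with the $a$-or-$c$ slot; pairing the three small roots among themselves gives coefficients that are $O(s^{-1})$ and hence cannot be nonzero integers for $|s| \ge 3$ unless they are exactly $0$, which the asymptotic expansions show they are not. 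This confines the analysis to the two sign cases above and the finitely many small $|s|$, all of which I would dispatch by direct computation, completing the proof.
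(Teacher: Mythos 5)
Your argument is correct, but the decisive step is genuinely different from the paper's. The paper rules out a factorization into two integral quadratics analytically: it invokes the asymptotic expansions of the four real roots (Lemma~\ref{roots}) to show that $r_2+r_j$ is never an integer for $|s|\ge 10$, and checks the remaining $|s|<10$ individually. You instead compare coefficients: from $bd=1$ you get $b=d=\pm 1$, whence the linear coefficient forces $ad+bc=\pm(a+c)=4$, while the cubic coefficient forces $a+c=4s^3-4s^2+8s-4$. The equation $4s^3-4s^2+8s-4=\pm 4$ has only the integer solutions $s=1$ (from $(s-1)(s^2+2)=0$) and $s=0$ (from $4s(s^2-s+2)=0$), and in each surviving sub-case the remaining equation $b+d+ac=-6s^2-6$ gives $a+c$ and $ac$ with non-square discriminant ($72$ for $s=1$, $32$ for $s=0$), so no integral factorization exists. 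This is more elementary than the paper's route --- it needs no root estimates at all and treats every integer $s$ uniformly except two explicit values --- at the cost of being special to the shape of this polynomial (constant term $1$, linear term $4$); the paper's analytic method is the one that generalizes and reuses machinery needed elsewhere. Two small points: your displayed values of $F_s(1)$ and $F_s(-1)$ are slightly off (they should be $4s^3-10s^2+8s-4$ and $-2(s^2+2)(2s+1)$, the latter factorization making non-vanishing at integers immediate), though the conclusion is unaffected; and your final paragraph about root magnitudes is superfluous given the coefficient argument --- it is essentially a sketch of the paper's own proof, and as stated (``consistent only with the $a$-or-$c$ slot'') it would not by itself rule out pairing $r_1$ with one of the small roots.
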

\begin{proof} The only possible rational roots of $F_s$ are $\pm 1$. But $F_s(\pm 1)\ne 0$ when $s\in \mathbb Z$.
Therefore, $F_s$ does not have a linear factor, so, if it factors, it must have two quadratic factors,
and they must have integer coefficients. This means that $r_2+r_j\in \mathbb Z$ for some $j$.

Let $|s|\ge 10$, say. The cases with $|s|<10$ can be checked individually.
From Lemma \ref{roots}, we see that $r_2+ r_1$ is not an integer, so $r_2, r_1 $ cannot be the roots of a
quadratic factor. Also, $0<|r_2+r_3|<1$ and $0<|r_2+r_4|<1$, so these sums cannot be 
integers. Therefore, $F_s$ cannot factor into quadratic factors.    \end{proof}

\begin{lemma} Let $K_s$ be the splitting field of $F_s$ and assume $3s^2-4s+4$ is a square.
Then $\text{Gal}(K_s/\mathbb Q)$ is cyclic, and the linear fractional transformation $M$ gives the Galois action on
the roots of $F_s(t)$.\end{lemma}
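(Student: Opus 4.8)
The plan is to show that the linear fractional transformation attached to $M$ permutes the four roots of $F_s$ in a single $4$-cycle, and then to use the fact that $M$ is defined over $\mathbb{Q}$ to force every element of the Galois group to be a power of this transformation. The discriminant computation (\ref{disc}) already tells us, since $s^2+2$ is not a square, that the Galois group is not contained in $A_4$; but the argument below determines the group outright and does not need this.

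First, because $3s^2-4s+4$ is assumed to be a square, equation (\ref{fg}) shows that $f$ and $g$ are rational (indeed integers), so $M$ has rational entries and the associated map $\phi(x)=(fx-1)/(\tfrac{f^2+g^2}{2}x-g)$ is a M\"obius transformation defined over $\mathbb{Q}$. I would first verify that $\phi$ carries roots of $F_s$ to roots of $F_s$. The cleanest route is to recall that $F_s$ arose (after clearing denominators, up to a nonzero constant) precisely from the norm relation (\ref{norm1}), that is, from setting the product $\Pi(x):=x\cdot(\phi x)\cdot(\phi^2 x)\cdot(\phi^3 x)$ equal to $1$. Since $M^4$ is scalar we have $\phi^4=\mathrm{id}$, and therefore $\Pi(\phi x)=(\phi x)(\phi^2 x)(\phi^3 x)(\phi^4 x)=\Pi(x)$. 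Hence any $x$ with $\Pi(x)=1$ also satisfies $\Pi(\phi x)=1$, so $\phi$ sends the root set into itself, and being injective it permutes the roots. One should note that no root is a pole of $\phi$; this can be read off from the expansions in Lemma \ref{roots} for $|s|$ large and checked directly for the finitely many small $|s|$.

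Next I would show that $\phi$ acts as a $4$-cycle. Because $M^4$ is scalar, $\phi$ permutes the four roots with order dividing $4$. It is not the identity on the roots, since a nontrivial M\"obius transformation fixes at most two points; and it does not have order $2$, for that would force $\phi^2$ — the transformation attached to the matrix $M^2$ of Section 1, which is not scalar — to fix all four roots, again impossible. So $\phi$ has order exactly $4$ on four points, i.e. it is a $4$-cycle. Writing $\theta_i=\phi^i(\theta)$ for a fixed root $\theta$ and $i\in\mathbb{Z}/4$, the roots are exactly $\theta_0,\theta_1,\theta_2,\theta_3$. Now comes the decisive point, where the rationality of $M$ is used: for any $\tau\in\mathrm{Gal}(K_s/\mathbb{Q})$, since $\phi$ has coefficients in $\mathbb{Q}$ we have $\tau(\phi x)=\phi(\tau x)$, so $\tau$ commutes with $\phi$. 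Thus if $\tau(\theta_0)=\theta_j$ then $\tau(\theta_i)=\tau(\phi^i\theta_0)=\phi^i(\theta_j)=\theta_{i+j}$, so $\tau$ acts as the shift $i\mapsto i+j$. This embeds $\mathrm{Gal}(K_s/\mathbb{Q})$ into the cyclic group $\mathbb{Z}/4$ of shifts, so its order divides $4$; since $F_s$ is irreducible (by the preceding lemma) the group is transitive and its order is divisible by $4$. Hence the Galois group is cyclic of order $4$, generated by $\theta_i\mapsto\theta_{i+1}$, which is the action of $M$.

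I expect the one genuine obstacle to be the first step, namely establishing rigorously that $\phi$ permutes the roots. The symmetry $\Pi(\phi x)=\Pi(x)$ makes this transparent provided the passage between $\Pi(x)=1$ and $F_s(x)=0$ is an equivalence, which requires the clearing-of-denominators leading constant $fg(f+g)-(f^2+g^2)^3/4$ to be nonzero and the roots to avoid the poles of $\Pi$; both are routine but must be checked. An alternative that bypasses $\Pi$ altogether is to clear denominators in $F_s(\phi(t))$ and verify by a single polynomial division that $F_s(t)$ divides the resulting numerator identically in $s$, a purely mechanical computation that simultaneously confirms $\phi$ maps roots to roots.
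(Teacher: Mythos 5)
Your proof is correct and follows essentially the same route as the paper's: both rest on the observation that $M$ cyclically permutes the factors of the norm relation (\ref{norm1}), hence sends roots of $F_s$ to roots of $F_s$, and then invoke the rationality of $M$ together with the irreducibility of $F_s$; you simply make explicit the bookkeeping (that $M$ acts as a genuine $4$-cycle on the roots and commutes with the Galois action) that the paper leaves implicit. One trivial slip in your closing remark: the leading coefficient of the quartic obtained by clearing denominators in (\ref{norm1}) is $fg(f+g)$ (the denominator side has degree only $3$), so the nonvanishing condition you need is just $fg(f+g)\ne 0$, i.e.\ $sp\ne 0$, which holds for every $s\ne 0$ in the family.
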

\begin{proof} Since $3s^2-4s+4$ is a square, the parameters $f$ and $g$ exist (see Equation~(\ref{fg})).
Let $\theta$ be a root of $F_s$. Then $\theta$ satisfies Equation (\ref{norm1}). Let $\theta'=M\theta$, the result
of applying the linear fractional transformation $M$ to $\theta$. Since $M$ cyclically permutes
the factors in Equation (\ref{norm1}), we see that $\theta'$ also satisfies this equation, and therefore
$F_s(\theta')=0$. Therefore, $\mathbb Q(r_1)$ contains all the roots of $F_s$.
Since $F_s$ is irreducible, it follows that the Galois group
of $F_s$ is cyclic of order 4 and is generated by $M$.\end{proof}

\section{The discriminant}

\begin{proposition} Suppose that $s^2+2$ is squarefree and $3s^2-4s+4$ is a square. Then the discriminant of $K_s$ is $2^{8}(s^2+2)^3$.
\end{proposition}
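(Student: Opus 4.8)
The plan is to compute $\text{disc}(K_s)$ one prime at a time, starting from the relation $\text{disc}(F_s)=[\mathcal O_{K_s}:\mathbb Z[\theta]]^2\,\text{disc}(K_s)$ (valid since $K_s=\mathbb Q(\theta)$ for a root $\theta$ of the irreducible $F_s$). By Equation (\ref{disc}) we have $\text{disc}(F_s)=2^8(3s^2-4s+4)^3(s^2+2)^3$, so the only primes that can ramify divide $2(3s^2-4s+4)(s^2+2)$, and it suffices to find $v_p(\text{disc}(K_s))$ for each. Throughout I use that $K_s/\mathbb Q$ is cyclic of order $4$ with unique quadratic subfield $k_s=\mathbb Q(\sqrt{s^2+2})$; since $s$ is even, $s^2+2\equiv 2\pmod 4$ is squarefree, so $\text{disc}(k_s)=4(s^2+2)$. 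For each $p$ the inertia group $I_p\subseteq\text{Gal}(K_s/\mathbb Q)\cong\mathbb Z/4$ is constrained by its image in $\text{Gal}(k_s/\mathbb Q)$, and I read off $v_p(\text{disc}(K_s))=a_p(\chi^2)+2a_p(\chi)$ from the local conductor–discriminant formula, where $\chi$ is a quartic character cutting out $K_s$.

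The odd primes dividing $s^2+2$ are the decisive easy case. Lemma \ref{fglemma} writes $(s^2+2)/2=((f+1)/2)^2+((g+1)/2)^2$ as a sum of two squares; since $s^2+2$ is squarefree, $(s^2+2)/2$ is odd and squarefree, so every odd $p\mid s^2+2$ is $\equiv 1\pmod 4$ (in particular $p\ne 3$, as $3$ cannot divide a squarefree sum of two squares). Such a $p$ ramifies in $k_s$, so $I_p$ surjects onto $\text{Gal}(k_s/\mathbb Q)$; the only order-$2$ subgroup of $\mathbb Z/4$ is $\text{Gal}(K_s/k_s)$, which maps trivially, so $I_p=\mathbb Z/4$ and $p$ is totally tamely ramified. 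Then $a_p(\chi)=a_p(\chi^2)=1$, giving $v_p(\text{disc}(K_s))=3$. Hence the odd part of $\text{disc}(K_s)$ is exactly $\prod_{p\mid (s^2+2)/2}p^3=((s^2+2)/2)^3$.

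The prime $2$ is wild but still forced. The Pell equation (\ref{Pell}) expresses $3u-1$ as $\pm$ the rational part of $(1+\sqrt3)(2+\sqrt3)^n$; reducing the recurrence $X_{n+1}=4X_n-X_{n-1}$ for these rational parts modulo $2$ shows they are all odd, so $u$ is even and $s\equiv 0\pmod 4$. Consequently $(s^2+2)/2\equiv 1\pmod 8$, so locally $k_s\otimes\mathbb Q_2\cong\mathbb Q_2(\sqrt2)$, a ramified quadratic extension with $a_2(\chi^2)=3$. The inertia argument above applies verbatim at $2$, so $2$ is totally ramified and $\chi$ is faithful on $I_2$, hence of order $4$ as a local character of $\mathbb Q_2^\times$. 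Every order-$4$ character of $\mathbb Q_2^\times$ has conductor exponent $4$, so $a_2(\chi)=4$ and $v_2(\text{disc}(K_s))=3+8=11$. Combining the last two paragraphs gives $\text{disc}(K_s)=2^{11}((s^2+2)/2)^3=2^8(s^2+2)^3$, provided no further prime ramifies.

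The remaining point, which I expect to be the main obstacle, is to show that the odd primes $p\mid 3s^2-4s+4=(f-g)^2$ with $p\nmid s^2+2$ do not ramify. These are genuine (for instance $p=11$ when $s=-12$) and divide the index $[\mathcal O_{K_s}:\mathbb Z[\theta]]$, so Dedekind's criterion on $F_s\bmod p$ does not apply. Such a $p$ is unramified in $k_s$, so $I_p\subseteq\text{Gal}(K_s/k_s)=\mathbb Z/2$; the danger is $I_p=\mathbb Z/2$, a quadratic component of $\chi$ at $p$ that is invisible in $\chi^2$ and hence undetectable from $k_s$ alone. To exclude this I would use the Kummer description $K_s=k_s(\sqrt\delta)$ with $\delta=(r_1-r_3)^2\in k_s$, where $M^2$ fixes $k_s$ and negates $r_1-r_3$; note also $(r_1-r_3)(r_2-r_4)=c\sqrt{s^2+2}$ for some $c\in\mathbb Q$, being $M^2$-invariant and negated by $M$. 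Since $K_s/k_s$ is unramified at a prime $\mathfrak p\nmid 2$ of $k_s$ exactly when $v_{\mathfrak p}(\delta)$ is even, the whole problem reduces to computing the parity of $v_{\mathfrak p}(\delta)$ above the spurious $p$. The hard part will be proving this parity is even, using that the $r_i$ are units, that $p$ is unramified in $k_s$, and the factorization $\text{disc}(F_s)=\prod_{i<j}(r_i-r_j)^2$; this is where the explicit expansions of Lemma \ref{roots} must be brought to bear, since the quadratic subfield does not pin down $\chi$ at these primes. Once this is done, $\chi$ is unramified outside $2$ and the odd primes dividing $s^2+2$, so $v_p(\text{disc}(K_s))=0$ there and $\text{disc}(K_s)=2^8(s^2+2)^3$.
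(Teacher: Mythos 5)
There is a genuine gap at exactly the point you flag as ``the main obstacle.'' Your treatment of the prime $2$ and of the odd primes dividing $s^2+2$ is correct and essentially matches the paper (the paper also gets $v_2(D)=11$ from total ramification at $2$ plus the conductor--discriminant formula, and the exponent $3$ at odd $p\mid s^2+2$ from the conductor of $\chi$ being divisible by that of $\chi^2$; your observation that $s\equiv 0\pmod 4$ is unnecessary, since any $d\equiv 2\pmod 4$ gives $2$-conductor $8$ for the quadratic character). But the heart of the proposition is showing that the odd primes $q\mid 3s^2-4s+4=(f-g)^2$ with $q\nmid s^2+2$ do not ramify, and for this you only state a plan --- reduce to the parity of $v_{\mathfrak p}\bigl((r_1-r_3)^2\bigr)$ --- and explicitly defer the proof (``the hard part will be proving this parity is even''). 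That reduction is sound in principle for $\mathfrak p\nmid 2$, but nothing in your sketch actually controls the valuation of $(r_1-r_3)^2$ at primes above $q$; knowing that $N_{k_s/\mathbb Q}\bigl((r_1-r_3)^2\bigr)$ has even $q$-valuation does not by itself give evenness at each prime of $k_s$ above $q$ when $q$ splits, and the asymptotic expansions of Lemma \ref{roots} are archimedean data that say nothing about $q$-adic valuations. So the proposal does not prove the proposition.

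For comparison, the paper closes this gap by a different, more elementary device: for $q\mid f-g$ odd one has $f\equiv g\not\equiv 0\pmod q$, so Equation (\ref{fgpoly}) reduces to $F_s(t)\equiv (t-1/f)^3(t-f^3)\pmod q$; the resultant of $f^4-1$ with $12f^2-8f+4$ is $2^{13}\cdot 3$, so $1/f\equiv f^3$ forces $q=3$, which is excluded because it would make $9\mid s^2+2$. Then if $q$ ramified, the inertia group would contain $\sigma^2$, which sends the unique root congruent to $f^3$ to a root congruent to $1/f$ modulo a prime above $q$, contradicting $f^3\not\equiv 1/f$. You might try to salvage your Kummer-theoretic route (it is the approach hinted at in the paper's remark via Equation (\ref{Kummer})), but as written the decisive step is missing.
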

\begin{proof}
The discriminant of the polynomial $F_s(t)$ is
$$
2^8 (s^2+2)^3 (3s^2-4s+4)^3 = 2^8(s^2+2)^3 (f-g)^6,
$$
where we have used Lemma \ref{fglemma} to obtain the second expression.
We need to show that the factor $(f-g)^6$ can be removed. Let $q$ be an odd prime dividing $f-g$.

Since $-4spL= s^4-4p^2+4$, we cannot have $f\equiv g\equiv 0\pmod q$. 

If $fg(f+g)\not\equiv 0\pmod q$ (equivalently, $f\not\equiv 0\pmod q$), then Equation (\ref{fgpoly})
becomes
$$
F_s(t) \equiv (t-1/f)^3 (t-f^3) \pmod q.
$$
Also,  Lemma \ref{fglemma} implies that 
\begin{equation}\label{cong}
0\equiv (f-g)^2 = 3s^2-4s+4=3(f+g)^2-4(f+g)+4\equiv 12f^2-8f+4\pmod q.
\end{equation}
We claim that the roots $1/f$ and $f^3$ of $F_s(t)\pmod q$ are distinct.
Suppose $1/f\equiv f^3\pmod q$. The resultant of $f^4-1$ and $12f^2-8f+4$ is  $2^{13}\cdot 3$,
so we must have $q=3$.

Equation (\ref{cong}) now tells us that $0\equiv 12f^2-8f+4\pmod 3$, so $f\equiv -1\pmod 3$. Since $f\equiv g$, we also have
$g\equiv -1\pmod 3$. Lemma \ref{fglemma} implies that $s^2+2\equiv 0\pmod 9$, contradicting the assumption that $s^2+3$ is squarefree.

Therefore, $1/f\not\equiv f^3\pmod q$.

Let $\mathfrak q$ be a prime of $K_s$ dividing $q$ and let $I$ be the inertia subgroup of $\text{Gal}(K_s/\mathbb Q)$ for $\mathfrak q$. 
If $q$ divides the discriminant of $K$, then $\sigma^2\in I$, where $\sigma$ 
generates $\text{Gal}(K_s/\mathbb Q)$.  Let $r_j\equiv f^3\pmod {\mathfrak q}$. Then the other three roots are congruent
to $1/f$ mod $\mathfrak q$. But $\sigma^2\in I$ means that $f^3\equiv r_j\equiv \sigma^2(r_j)\equiv 1/f \pmod {\mathfrak q}$, contradicting
the fact that $f\not\equiv 1/f^3\pmod q$.
Therefore, $q$ does not divide the discriminant of $K_s$, so $f-g$ contributes no odd prime factors to the discriminant of $K_s$.

We have proved that the discriminant $D$ of $K_s$ divides a power of 2 times $(s^2+2)^3$.

The subfield $k_s=\mathbb Q(\sqrt{s^2+2})\subset K_s$ has conductor $4(s^2+2)$, since $s^2+2$ is squarefree
and congruent to 2 mod 4. Let $\chi$ be a Dirichlet character of order 4 attached to $K_s$.
Then $\chi^2$ is the quadratic character attached to $k_s$. Since $\chi^2$ has conductor $4(s^2+2)$,
it follows that $\chi$ and $\chi^{-1}$ have conductor divisible by $4(s^2+2)$. The conductor-discriminant
formula implies that $D$ is divisible by $4^3(s^2+2)^3$. We have therefore proved that $D$ is a power of 2
times $((s^2+2)/2)^3$.

Since 2 ramifies in $k_s/\mathbb Q$ and $K_s/\mathbb Q$ is cyclic, 2 is totally ramified in $K_s/\mathbb Q$.
This means that $\chi$ is the product of a character of conductor 16 and a character of odd conductor. The
conductor-discriminant formula implies that $2^{11}$ is the exact power of 2 dividing $D$.  
Since $(s^2+2)^3$ contributes $2^3$, this completes the proof. \end{proof}

\noindent
{\bf Remark.} 
In order to determine the ramification and discriminant of $K_s$, we could consider the extension
$K_s(i)/\mathbb Q(i)$. 
Order the roots $r_1, r_2, r_3, r_4$, so that $Mr_j=r_{j+1}$. Let
$$
\rho=r_1+r_2i -r_3-r_4i.
$$
Computationally, it appears that
\begin{equation}\label{Kummer}
\rho^4 = -2^6i(f+gi)^8 \pi^3 \overline{\pi}, 
\end{equation}
where $\pi = (f+1)/2 - i(g+1)/2$. This would suffice to remove the factor $(3s^2-4s+4)^3$,
since $\sqrt[4]{\rho}$ generates the extension $K(i)/\mathbb Q(i)$, and since the odd parts of the discriminants
of $K_s(i)/\mathbb Q(i)$ and $K_s/\mathbb Q$ are equal. However, the verification of Equation (\ref{Kummer}) seems to be potentially quite
involved, which is why we had the incentive to find the above proof.

\section{Fundamental units}\label{fundamental}

The purpose of this section is to prove that $\pm 1$ and the roots of $F_s(t)$ generate the units of $K_s$.
Throughout this section, we assume that $3s^2-4s+4$ is a square.
 
\begin{lemma}\label{quadunit} Let $s\ne 0$ and suppose $s^2+2$ is squarefree. Then
$$\epsilon=-r_1r_3=
s^2+1+|s|\sqrt{s^2+2}$$ 
is the fundamental unit of the ring of integers of $\mathbb Q(\sqrt{s^2+2})$.\end{lemma}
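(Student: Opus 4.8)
The plan is to first identify $-r_1r_3$ as an explicit element of the quadratic subfield $k_s=\mathbb{Q}(\sqrt{s^2+2})$, and then to show by a size comparison that it cannot be a proper power of a smaller unit.

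For the identification, recall that the Galois group is generated by $\sigma=M$ with $\sigma(r_j)=r_{j+1}$, so $\sigma^2$ interchanges $r_1\leftrightarrow r_3$ and $r_2\leftrightarrow r_4$ and fixes $k_s$. Hence $r_1r_3$ and $r_2r_4$ lie in $k_s$, are conjugate over $\mathbb{Q}$, and their product equals $r_1r_2r_3r_4=1$ (the constant term of $F_s$). Thus I need only compute the symmetric combination $r_1r_3+r_2r_4\in\mathbb{Z}$, which is the $\sigma$-fixed root of the resolvent cubic of $F_s$. I would evaluate it either directly from the coefficients of $F_s$ or by feeding the expansions of Lemma~\ref{roots} into the product: there $r_1r_3=-2s^2-2+O(s^{-1})$ while $r_2r_4=O(s^{-2})$, so the integer $r_1r_3+r_2r_4$ equals $-2s^2-2$. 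Consequently $r_1r_3$ and $r_2r_4$ are the roots of $X^2+(2s^2+2)X+1=0$, namely $-(s^2+1)\pm|s|\sqrt{s^2+2}$; since $|r_1r_3|$ has size $2s^2$, it is the root of larger absolute value, giving $-r_1r_3=(s^2+1)+|s|\sqrt{s^2+2}=\epsilon$. A direct norm computation $(s^2+1)^2-s^2(s^2+2)=1$ reconfirms that $\epsilon$ is a unit of $\mathbb{Z}[\sqrt{s^2+2}]$, which is the full ring of integers since $s^2+2$ is squarefree and $\equiv 2\pmod 4$.

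It remains to prove $\epsilon$ is fundamental. Write the fundamental unit as $\epsilon_0=a+b\sqrt{s^2+2}$ with $a,b\ge 1$, and suppose $\epsilon=\epsilon_0^{\,k}$. From $a^2-(s^2+2)b^2=\pm 1$ with $b\ge 1$ I get $a^2\ge s^2+1$, so $a>|s|$, and also $b\sqrt{s^2+2}>|s|$; hence every nontrivial unit exceeds $2|s|$, in particular $\epsilon_0>2|s|$. On the other hand $\epsilon<4s^2$: this is equivalent to $3s^2-1>|s|\sqrt{s^2+2}$, which after squaring becomes $8s^4-8s^2+1>0$, valid for all $|s|\ge 1$. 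Therefore $\epsilon_0>2|s|>\sqrt{\epsilon}$, while $k\ge 2$ would force $\epsilon_0=\epsilon^{1/k}\le\sqrt{\epsilon}$. This contradiction gives $k=1$, so $\epsilon$ is the fundamental unit.

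The main obstacle is the exact evaluation of $r_1r_3+r_2r_4$: the asymptotics from Lemma~\ref{roots} pin it down as a real number near $-2s^2-2$, but to conclude exact equality one must argue that this symmetric function is a polynomial in $s$, so that matching leading asymptotics suffices, or compute the resolvent cubic explicitly and isolate its rational root. Once the identity $r_1r_3+r_2r_4=-2s^2-2$ is in hand, both the explicit formula for $\epsilon$ and the fundamentality argument are short.
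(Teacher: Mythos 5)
Your proof is correct, and while its fundamentality half coincides with the paper's, it reaches the identification $-r_1r_3=\epsilon$ by a genuinely different route. The paper proceeds in the opposite order: it first shows $\epsilon=s^2+1+|s|\sqrt{s^2+2}$ is fundamental by exactly your size argument (any unit $a+b\sqrt{s^2+2}$ with $a,b\ge 1$ has $a^2\ge s^2+1$, whence $\epsilon_0^2>\epsilon$), and then identifies $-r_1r_3$ \emph{softly}, with no trace computation at all: $-r_1r_3$ is a positive unit of $k_s$ and Lemma~\ref{roots} gives $1<-r_1r_3<\epsilon^2$, forcing $-r_1r_3=\epsilon$ from the structure of the unit group. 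Your exact evaluation works too, and the gap you flagged closes cleanly along your second suggested line: the resolvent cubic $u^3-cu^2+(bd-4e)u-(b^2e-4ce+d^2)$ of $F_s$, with $b=4s^3-4s^2+8s-4$, $c=-6s^2-6$, $d=4$, $e=1$, has $u=-2s^2-2$ as an exact root (a direct polynomial identity in $s$; e.g.\ for $s=4$ the cubic is $u^3+102u^2+876u-48824$, with root $-34$), and since by Lemma~\ref{roots} the three roots $r_1r_2+r_3r_4$, $r_1r_3+r_2r_4$, $r_1r_4+r_2r_3$ are asymptotically $-2(1+\sqrt{3})s^2$, $-2s^2-2$, $-2(1-\sqrt{3})s^2$, only the middle one can be the root $-2s^2-2$; alternatively your integrality remark suffices, since $r_1r_3+r_2r_4\in\mathbb{Z}$ lies within distance less than $1$ of the integer $-2s^2-2$ once $|s|$ is moderately large, the small family values being checkable directly. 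One comparative point worth noting: the paper's Note after the lemma emphasizes that its sandwich argument does \emph{not} require knowing the ordering $\sigma(r_j)=r_{j+1}$ (which the paper asserts earlier without detailed proof), and in fact it \emph{derives} $\sigma^2(r_1)=r_3$ from the lemma; your write-up assumes that ordering to place $r_1r_3$ in $k_s$, but your approach can be made equally ordering-free, since once $r_1r_3+r_2r_4=-2s^2-2\in\mathbb{Q}$ and $(r_1r_3)(r_2r_4)=1$ are known, the membership $r_1r_3\in k_s$ follows from the quadratic $X^2+(2s^2+2)X+1$ with no Galois input. In exchange for being longer, your route buys the exact identity $-r_1r_3=(s^2+1)+|s|\sqrt{s^2+2}$ directly and independently of unit-group considerations, whereas the paper's argument is shorter but only pins down $-r_1r_3$ after fundamentality is in hand.
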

\begin{proof} Since $s^2+2\not\equiv 1\pmod 4$, the fundamental unit is in $\mathbb Z[\sqrt{s^2+2}]$ (that is,
there is no 2 in the denominator). If $\epsilon_0=a+b\sqrt{s^2+2}$ is the fundamental unit, so $a, b>0$, then
$\epsilon_0^2>s^2+1+s\sqrt{s^2+2}=\epsilon$. Since $\epsilon$ is a power of $\epsilon_0$, we must have
$\epsilon=\epsilon_0$. 

From Lemma \ref{roots}, we see that
$ r_1r_3\approx -2s^2$. In particular, $1<-r_1r_3<\epsilon^2$, so $r_1r_3=-\epsilon$. \end{proof}

{\bf Note:} We did not need to know the ordering of the $r_j$ under the Galois group to obtain this result,
since $-r_1r_3$ is the only combination with the same approximate size as $\epsilon$. In fact, once we know this,
 if $\sigma$ is a generator of $\text{Gal}(K_s/\mathbb Q)$ then $\sigma^2$ maps $r_1$ to $r_3$,
and hence $\sigma$ or $\sigma^{-1}$
permutes the roots $r_1, r_2, r_3, r_4$
cyclically (that is, $r_j\mapsto r_{j+1}$). Of course, we know that $M=\sigma$ or $\sigma^{-1}$, depending
on the choice of signs in Equation (\ref{fg}).

\begin{proposition}\label{RvsD} Let $K$ be a totally real number field with $\text{Gal}(K/\mathbb Q)$ cyclic of order 4.
Let $R_K$ and $D_K$ be the regulator and discriminant of $K$. Let $k$ be the quadratic subfield
of $K$ and let $\epsilon$ and $d_k$ be the fundamental unit and discriminant of $k$.
Then
$$
\frac{1}{4}\log^2\left(\frac{D_K}{16 d_k^2}\right) \le \frac{R_K}{\log\epsilon}.
$$
If $d_k>150$, then
$$
\frac{1}{4}\log^2\left(\frac{D_K}{4.84 d_k^2}\right) < \frac{R_K}{\log\epsilon}.
$$
\end{proposition}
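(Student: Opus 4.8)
The plan is to convert the ratio $R_K/\log\epsilon$ into the area of the lattice of relative units and then bound that area from below by a geometry-of-numbers argument tuned to the relative discriminant $D_K/d_k^2$. Fix a generator $\sigma$ of $G=\mathrm{Gal}(K/\mathbb Q)$ and order the real embeddings by $\tau_j=\sigma^{j-1}$, so that $\tau_1,\tau_3$ restrict to one embedding of $k$ and $\tau_2,\tau_4$ to the other, and write $\ell(u)=(\log|\tau_1u|,\dots,\log|\tau_4u|)$. Since $\sigma\epsilon=\epsilon^{-1}$, one has $\ell(\epsilon)=(m,-m,m,-m)$ with $m=\log\epsilon$, while a relative unit $\eta$ (one with $N_{K/k}\eta=\pm1$) has $\ell(\eta)=(a,b,-a,-b)$, so these vectors are orthogonal. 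Taking a basis $\eta_1,\eta_2$ of the relative units, with coordinates $(a_i,b_i)$, together with $\epsilon$, the $3\times3$ regulator determinant collapses to $R_K=2\log\epsilon\,|a_1b_2-a_2b_1|$; hence $R_K/\log\epsilon$ equals twice the area of the relative-unit lattice $\Lambda_{\mathrm{rel}}$ in the $(a,b)$-plane, \emph{provided} $\{\epsilon,\eta_1,\eta_2\}$ generate $\mathcal O_K^\times$ modulo torsion.

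I would then apply Minkowski's convex body theorem to $\Lambda_{\mathrm{rel}}$, but using the $\ell^1$-ball $\{|a|+|b|\le\mu\}$ (of area $2\mu^2$) rather than a Euclidean disc, because the $\ell^1$-shape is exactly what matches the discriminant estimate below and is what produces the sharp constant. This yields a nonzero relative unit $\eta$ with $|a|+|b|=\mu_0$ and $\mu_0^2\le2\,\mathrm{area}(\Lambda_{\mathrm{rel}})=R_K/\log\epsilon$. Since the only relative units lying in $k$ are roots of unity, $\eta\notin k$ and thus $K=k(\eta)$.

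Because $\eta$ generates $K/k$, the relative discriminant divides $\mathrm{disc}_{K/k}(\eta)=(\eta-\sigma^2\eta)^2$, so $D_K/d_k^2=N_{k/\mathbb Q}(\mathfrak d_{K/k})\le(\tau_1\eta-\tau_3\eta)^2(\tau_2\eta-\tau_4\eta)^2$, using $\tau_1\sigma^2=\tau_3$ and $\tau_2\sigma^2=\tau_4$. Writing $a=\log|\tau_1\eta|$, $b=\log|\tau_2\eta|$ and using $N_{K/k}\eta=\pm1$ (so $\tau_3\eta=\pm(\tau_1\eta)^{-1}$, etc.), each factor is at most $2\cosh a$, resp.\ $2\cosh b$, whence $D_K/d_k^2\le16\cosh^2a\,\cosh^2b$. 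Taking logarithms and using $\cosh x\le e^{|x|}$ gives $\tfrac12\log\!\big(D_K/(16d_k^2)\big)\le|a|+|b|=\mu_0$; squaring and combining with $\mu_0^2\le R_K/\log\epsilon$ yields the first inequality.

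The main obstacle is the hypothesis that $\{\epsilon,\eta_1,\eta_2\}$ is an integral basis, i.e.\ that the unit index $q=[\mathcal O_K^\times:\mathcal O_k^\times E_{\mathrm{rel}}]$ equals $1$: in general $q\in\{1,2\}$, and $q=2$ (which occurs exactly when $\epsilon$ is a relative norm $N_{K/k}u$) costs a factor of $2$ in the identity of the first paragraph. Handling that case—by working instead with the ``half-unit'' $u$, which still generates $K/k$ and can be translated by $\Lambda_{\mathrm{rel}}$ to control $(\tau_1u-\tau_3u)(\tau_2u-\tau_4u)$—is where the real care is needed. Finally, the sharper inequality for $d_k>150$ must come from the only remaining slack, the estimate $\cosh x\le e^{|x|}$ (equivalently $(1+e^{-2|a|})(1+e^{-2|b|})\le4$): a lower bound on $\max(|a|,|b|)$, valid once $d_k$ (hence the relative discriminant) is large, lets one replace the factor $4$ by $2.2$, giving $4.84$ in place of $16$.
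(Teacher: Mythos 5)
Your strategy is sound and lands on the paper's constants, but it is genuinely different in mechanism. The paper never invokes Minkowski's theorem: after disposing of $\mathbb Q(\zeta_{16})^+$ separately, it uses the fact that an odd prime ramifies in $K/k$ to show $E_K/E_k$ is a torsion-free, hence free rank-one, $\mathbb Z[i]$-module with generator $\eta$, and then computes $R_K/\log\epsilon=2(\log^2x+\log^2y)$ for that explicit generator, where $(\log x,\log y)$ is precisely your projection of $\ell(\eta)$ onto the plane $V=\{(a,b,-a,-b)\}$. The Cauchy--Schwarz step $\log x+\log y\le\sqrt2\,(\log^2x+\log^2y)^{1/2}$ then plays exactly the role of your Minkowski step; for a $\mathbb Z[i]$-lattice (a square lattice in $V$, since $\sigma$ acts as a rotation) the generator already realizes the $\ell^1$-Minkowski bound $|a|+|b|\le\sqrt{2\,\mathrm{covol}}$, which is why the two routes give the same constant. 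The discriminant estimate via $\mathfrak d_{K/k}\mid(\eta-\sigma^2\eta)^2$ and the $d_k>150$ refinement (the bound $d_k\le(x+1/x)^2(y+1/y)^2$ forces $\max(x,y)>\sqrt{10}$, whence $x+1/x<1.1x$) are the same in both arguments, and your diagnosis of where $4.84$ comes from is accurate.

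The one genuine gap is the unit-index case you flag and defer, and it does need to be closed: if $q=[E_K:E_kE_{\mathrm{rel}}]=2$, then $R_K/\log\epsilon=2\,\mathrm{covol}(\Lambda_{\mathrm{rel}})/q$, so Minkowski applied to $\Lambda_{\mathrm{rel}}$ only gives $\mu_0^2\le 2R_K/\log\epsilon$ and your constants degrade by a factor of $2$ inside the square. The paper sidesteps this by never restricting to relative units: its $\eta$ satisfies $\eta^{1+\sigma^2}=\delta\in E_k$ with $\delta$ arbitrary, i.e.\ it is exactly your ``half-unit.'' The clean repair of your argument is to apply Minkowski not to $\Lambda_{\mathrm{rel}}$ but to the projection $\Lambda'$ of the full log-lattice $\ell(E_K)$ onto $V$ (coordinates $a=\log|\tau_1u|-\tfrac12\log|\tau_1\nu|$, $b=\log|\tau_2u|-\tfrac12\log|\tau_2\nu|$ with $\nu=N_{K/k}u$): one always has $R_K/\log\epsilon=2\,\mathrm{covol}(\Lambda')$ with no index correction, a nonzero point of $\Lambda'$ gives a unit $u\notin E_k$, hence $K=k(u)$, and
$$
|\tau_1u-\tau_3u|\,|\tau_2u-\tau_4u|\le 4\,|N_{k/\mathbb Q}\nu|^{1/2}\cosh a\,\cosh b=4\cosh a\,\cosh b,
$$
since $N_{k/\mathbb Q}\nu=\pm1$. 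With that modification your proof closes in all cases, and it even avoids the paper's separate treatment of $\mathbb Q(\zeta_{16})^+$ and its torsion-freeness lemma.
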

\begin{proof}
This result is implicit in \cite{BergeM}, \cite{Gras}, and \cite{WashQ}, for example. However, since it does not seem to
be explicit in the literature, we include the proof for the convenience of the reader.

The result is easily verified for $\mathbb Q(\zeta_{16})^+$, the maximal real subfield of the 16th cyclotomic field.
In all other cases, an odd prime ramifies in $K/\mathbb Q$, so we may
assume that $K/k$ ramifies at some prime of $k$ that divides an odd prime of $\mathbb Z$.

Let $E_K$ and $E_k$ be the units of $K$ and $k$, and let $\sigma$ generate $\text{Gal}(K/\mathbb Q)$. Then $E_K/E_k$ is a $\mathbb Z[i]$-module,
where $i$ acts as $\sigma$. We claim that this module is torsion-free. If $\alpha\in \mathbb Z[i]$ 
maps $u\in E_K$ into $E_k$, then so does $\text{Norm}(\alpha)$, so it suffices to show that
the $\mathbb Z$-torsion is trivial. 

If $u\in E_K$ and $u^n\in E_k$, then $(\sigma^2 u)^n = u^n$, so $\sigma^2 u = \pm u$,
since $k$ is real. Therefore, $\sigma^2(u^2)=u^2$, so $u^2\in E_k$. Therefore, $k(u)/k$ ramifies
at most at the primes above 2, hence is trivial since $K/k$ is assumed to ramify at an odd prime.
Therefore, $u\in E_k$, so $E_K/E_k$ is torsion-free.

Since $E_K/E_k$ has $\mathbb Z$-rank 2, it has $\mathbb Z[i]$-rank 1, so there is a unit $\eta$
that generates it as a $\mathbb Z[i]$-module. Let $\eta^{1+\sigma^2}=\delta\in E_k$, and let
$\eta'=\sigma(\eta)$. Then $\{\pm 1, \epsilon, \eta, \eta'\}$ generates $E_K$ as a $\mathbb Z$-module.

A calculation shows that the regulator of $K$ is
$$
R_K=2 \log(\epsilon)\left( (\log |\eta| -\frac12\log |\delta|)^2 + (\log |\eta'| +\frac12\log |\delta|)^2\right).
$$
The different of $K/k$ divides $\eta-\sigma^2(\eta)=\eta-\delta/\eta$, so the discriminant of $K/k$
divides 
$$\text{Norm}_{K/k}(\eta-\delta/\eta)=-(\eta-\delta/\eta)^2.$$
Since $D_K=d_k^2 \text{ Norm}_{k/\mathbb Q}(D_{K/k})$, we find that
$$
D_K/d_k^2 \text{ divides } (\eta-\delta/\eta)^2(\eta'\pm1/(\delta\eta'))^2 \le \left(x+1/x\right)^2\left(y+1/y\right)^2,
$$
where $x=\text{Max}(|\eta|/|\delta|^{1/2}, \, |\delta|^{1/2}/|\eta|)$ and
$y=\text{Max}(|\eta'||\delta|^{1/2}, \, 1/|\delta|^{1/2}|\eta'|)$.

The conductor-discriminant formula implies that $d_k^3\mid D_K$. If $1\le x, y\le \sqrt{10}$, then
$$
d_k\le (x+1/x)^2(y+1/y)^2 <150.
$$
Therefore, if $d_k>150$ then at least one of $x, y$ is larger than $\sqrt{10}$. If
$x>\sqrt{10}$, then $x+1/x< 1.1x$. If $1\le x\le \sqrt{10}$, then $x+1/x\le 2x$. Therefore,
$$
\left(x+1/x\right)\left(y+1/y\right) < 2.2xy,
$$
so
\begin{align*}
\log (D_K/d_k^2) &\le \log \left(x+1/x\right)^2\left(y+1/y\right)^2 < 2\left(\log 2.2 +\log x  + \log y\right)\\
&\le 2\left(\log 2.2 +\sqrt{2} \left(\log^2 x + \log^2 y\right)^{1/2}\right) \text{ (Cauchy-Schwarz)}\\
&= 2\left(\log 2.2 +(R_K/\log \epsilon)^{1/2}\right).
\end{align*}
This yields the last statement of the proposition. Note that by increasing the lower bound for $d_k$, we could
replace $4.84$ by any number larger than $4$.

If we do not require $d_k>150$, then a slightly simpler argument works (we thank St\'ephane Louboutin for pointing this out):
We have $x, y\ge 1$, so $x+1/x\le x+1\le 2x$ and similarly for $y$. Therefore,
$$
\left(x+1/x\right)\left(y+1/y\right) \le  4xy.
$$
The above argument yields 
$$
\frac{1}{4}\log^2\left(\frac{D_K}{16 d_k^2}\right) \le \frac{R_K}{\log\epsilon}.
$$
This inequality suffices for our purposes.
\end{proof}

\begin{lemma}\label{indexnot234} Assume $s^2+2$ is squarefree. 
Let $E$ be the units of $K_s$ and let $U$ be the subgroup generated by $\pm 1$ and
$r_1, r_2, r_3, r_4$. Then $[E: U]\ne 2, 3, 4, 6, 7, 8$.
\end{lemma}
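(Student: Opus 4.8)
The plan is to prove something slightly stronger than stated: that $[E:U]$ is an \emph{odd} sum of two squares. This excludes all six values at once, since $2,4,6,8$ are even and $3,6,7$ are not sums of two squares.

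First I would identify the index as a norm from $\mathbb{Z}[i]$. By Lemma~\ref{quadunit} we have $\epsilon=-r_1r_3\in U$, so the quadratic unit group $E_k=\langle\pm1,\epsilon\rangle$ lies in $U$, giving a chain $E_k\subseteq U\subseteq E$. The proof of Proposition~\ref{RvsD} shows that $E/E_k$ is $\mathbb{Z}[i]$-free of rank $1$, with $i$ acting as $\sigma$; fix a generator $\eta$. Because $\bar r_2=\sigma\bar r_1=i\bar r_1$, $\bar r_3=-\bar r_1$, and $\bar r_4=-i\bar r_1$, the image of $U$ in $E/E_k\cong\mathbb{Z}[i]$ is the principal ideal $(\bar r_1)$. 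Writing $\bar r_1=(a+bi)\bar\eta$, this yields
$$[E:U]=[E/E_k:U/E_k]=[\mathbb{Z}[i]:(\bar r_1)]=N(a+bi)=a^2+b^2,$$
so the index is a sum of two squares. This disposes of $3$, $6$, and $7$.

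It remains to show the index is odd, which disposes of $2,4,6,8$; equivalently, I must show $2\nmid a^2+b^2$. I would argue through the natural map $U/U^2\to E/E^2$, which is injective exactly when $[E:U]$ is odd. Since $E\cap(K_s^*)^2=E^2$, a class $\bar w$ lies in the kernel precisely when $w\in U$ is a square in $K_s$, and this forces $w$ to be totally positive. Using the sign data of Lemma~\ref{roots} (for $s>0$ the roots $r_1,r_2,r_3,r_4$ have signs $-,+,+,-$; for $s<0$ the signs are globally reversed and the same computation applies), a direct $\mathbb{F}_2$-linear computation shows that the totally positive classes in $U/U^2\cong(\mathbb{Z}/2)^4$ form the two-element group $\{1,\bar\epsilon\}$ generated by the totally positive unit $\epsilon=-r_1r_3$. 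Hence the kernel is nontrivial if and only if $\epsilon$ is a square in $K_s$.

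Finally I would show $\epsilon\notin K_s^2$, which is the crux of the argument. Since $N_{k/\mathbb{Q}}(\epsilon)=(s^2+1)^2-s^2(s^2+2)=1$, the nontrivial $\tau\in\text{Gal}(k/\mathbb{Q})$ sends $\tau(\epsilon)=\epsilon^{-1}=(1/\sqrt\epsilon)^2$, which is a square in $k(\sqrt\epsilon)$; hence $k(\sqrt\epsilon)/\mathbb{Q}$ is Galois of degree $4$. The lift $\tilde\tau$ of $\tau$ with $\tilde\tau(\sqrt\epsilon)=1/\sqrt\epsilon$ satisfies $\tilde\tau^2=1$, so $\text{Gal}(k(\sqrt\epsilon)/\mathbb{Q})$ contains two distinct involutions and is therefore the Klein four-group, not cyclic. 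Since $\text{Gal}(K_s/\mathbb{Q})$ is cyclic, $K_s\ne k(\sqrt\epsilon)$; as both are quadratic over $k$, this gives $\sqrt\epsilon\notin K_s$, i.e. $\epsilon\notin K_s^2$. The kernel above is therefore trivial, the index is odd, and the lemma follows. The main obstacle is exactly this last step: recognizing that evenness of $[E:U]$ is governed precisely by whether $\epsilon$ becomes a square in $K_s$, and ruling that out through the cyclic-versus-biquadratic distinction.
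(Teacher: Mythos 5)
Your proof is correct, and it is a hybrid of the paper's argument and a genuinely different one. The first half --- observing that $E_k\subseteq U$, that $E/E_k\cong\mathbb Z[i]$, and that $U/E_k$ is the principal ideal $(\bar r_1)$, so that $[E:U]$ is a norm $a^2+b^2$ --- is exactly how the paper disposes of $3$, $6$, $7$. Where you genuinely diverge is the even indices. The paper handles $2$, $4$, $8$ one case at a time: the unique ideals of those indices are $(1+i)$, $(2)$, $(2+2i)$, and in each case writing $r_1$ in terms of a $\mathbb Z[i]$-generator $\eta$ and taking norms down to $k_s$ produces a relation of the shape $\pm1=-\epsilon\delta^2$ or $N(\eta)^2=-\epsilon\delta^2$, contradicting the fundamentality of $\epsilon$ from Lemma~\ref{quadunit}. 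You instead prove the stronger assertion that $[E:U]$ is odd, via the kernel of $U/U^2\to E/E^2$: the signature computation (which checks out --- with the relation $r_1r_2r_3r_4=1$ the totally positive classes in $U/U^2$ are exactly $\{1,\bar\epsilon\}$, for either sign of $s$) reduces everything to whether $\epsilon$ is a square in $K_s$, and the observation that $N_{k/\mathbb Q}(\epsilon)=1$ forces $k(\sqrt\epsilon)/\mathbb Q$ to be biquadratic rather than cyclic rules that out. Your route buys a cleaner uniform conclusion (the index is an odd sum of two squares, so a priori $1,5,9,13,\dots$) and avoids the three separate ideal computations, at the cost of invoking the archimedean sign data of Lemma~\ref{roots} and the small point, worth making explicit, that $\epsilon\notin k^{*2}$ (immediate from fundamentality) so that $k(\sqrt\epsilon)/\mathbb Q$ really has degree $4$. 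Both arguments ultimately rest on Lemma~\ref{quadunit} and, combined with the regulator bound, yield the same final dichotomy $[E:U]\in\{1,5\}$.
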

\begin{proof}
Let $E_k$ be the units of $\mathbb Q(\sqrt{s^2+2})$. By Lemma \ref{quadunit}, $E_k\subset U$.
Let $\overline{E} = E/E_k$ and $\overline{U}=U/E_k$. Then $[\overline{E} : \overline{U}]=[E : U]$. 
As in the proof of Proposition \ref{RvsD}, $\overline{E}\simeq \mathbb Z[i]$.  The subgroup
$\overline{U}$ is an ideal of $\mathbb Z[i]$ under this isomorphism.

If $[E : U]=2$, then there exists 
$\eta$ that generates $\overline{E}$ as a $\mathbb Z[i]$-module and such that $(1+i)\eta\equiv r_1$ mod $E_k$.
This means that there exists $\delta\in E_k$ such that $\eta^{1+\sigma}=r_1\delta$. Since $\sigma^2$ fixes $\delta$,
we have 
$$
\pm 1=\eta^{1+\sigma+\sigma^2+\sigma^3}=r_1r_3\delta^2=-\epsilon\delta^2.
$$
But $\epsilon$ is the fundamental unit of $\mathbb Q(\sqrt{s^2+2})$, so this is impossible.

Now suppose $[E : U]=4$. The only ideal of index 4 in $\mathbb Z[i]$ is $(2)$, so there exists $\eta\in E$ such that 
$\eta^2=r_1\delta$, with $\delta\in E_{k_s}$. Taking norms to $k_s$ yields $N(\eta)^2=-\epsilon\delta^2$. This
is impossible because $\epsilon$ is the fundamental unit of $k_s$.

If $[E : U]=8$, then there exists $\eta$ such that $\eta^{2(1+\sigma)}=r_1\delta$, with $\delta\in E_{k_s}$. Taking norms from $K_s$ to $k_s$,
we find 
$$
1=\left(\eta^{1+\sigma+\sigma^2+\sigma^3}\right)^2=r_1^{1+\sigma^2}\delta^{1+\sigma^2}=-\epsilon \delta^2,
$$
which is impossible.

Finally, every ideal in $\mathbb Z[i]$ has index that is a norm from $\mathbb Z[i]$ to $\mathbb Z$. Since 3, 6, 7 are
not norms, $[E : U]\ne 3, 6, 7$.
 \end{proof}

We can now show that $\pm 1$ and $r_1, r_2, r_3, r_4$ generate the units of $K_s$.
We assume that $|s| \ge 10^5$. The cases where $|s|<10^5$ can be checked individually.

Let $R$ be the regulator of $K_s$ and let $R'$ be the regulator for the group $U$ in Lemma \ref{indexnot234}. 
Then $R'/R = [E: U]$.
We have 
$$
R'=\pm\det \begin{pmatrix} \log |r_1| & \log |r_2| & \log |r_3| \\ \log |r_2| & \log |r_3| & \log |r_4| \\ \log |r_3| & \log |r_4| & \log |r_1| 
\end{pmatrix}.
$$
This equals (see \cite[Lemma 5.26(c)]{WashCycl})
\begin{gather*}
\frac14 \left(\log |r_1| +i \log |r_2| - \log |r_3| - i\log |r_4|\right)\left(\log |r_1| - \log |r_2| + \log |r_3| - \log |r_4|\right)\\
\times
\left(\log |r_1| -i \log |r_2| - \log |r_3| + i\log |r_4|\right)\\
= \frac14 \left(\log^2|r_1/r_3| + \log^2 |r_2/r_4|\right) \left(2\log \epsilon\right).
\end{gather*}
Lemma \ref{roots} implies that 
\begin{equation}\label{9ineq}
R'/\log \epsilon \le \frac12 \left(\log^2(9s^4) + \log^2(4)\right) < 9\log^2 |s|
\end{equation}
when $|s| \ge 10^5$. Also,
$$
\log^2\left(\frac{D_{K_s}}{16 d_{k_s}^2}\right) = \log^2(s^2+2) \ge 4\log^2 |s|
$$
when $|s|\ge 1$. 
Proposition \ref{RvsD} implies that
$$
\log^2 |s| \le \frac{R'/[E:U]}{\log \epsilon} < \frac{9}{[E:U]}\log^2 |s|.$$
Therefore, 
$$
[E:U] < 9.
$$
By Lemma \ref{indexnot234}, $[E:U]\ne 2, 3, 4, 6, 7, 8$, so $[E: U]=1$ or $5$. 
This completes the proof of Theorem \ref{main}.

When $s\equiv 4, 5\pmod 9$, we have $s^2+2\equiv 0\pmod 9$, so $s^2+2$ is not squarefree. However,
if $(s^2+2)/9$ is squarefree, then the effect on $D_{K_s}$ is small enough that the
above proof shows that we still obtain either the full group of units or a subgroup of index 5, except when $s=4$ (where the index $[E_{K_s} : U] = 40$).
Computational evidence suggests that the index 5 case does not occur.

Since the units of $K_s$ are fairly small, we expect the class numbers to be large. Here is a table of the class groups for the first few values of $s$
(for the class groups, $a\times b$ means $\mathbb Z/a\mathbb Z \times \mathbb Z/b\mathbb Z$).
These calculations, and others in this paper, were  done using GP/PARI \cite{pari}.

\begin{table}[h]
\begin{tabular}{c|c}
$s$ & class group of $K_s$ \\
\hline
4 & 1\\
$-12$ & 4\\
48 & $4\times 4\times 4$\\
$-176$ & $60\times 5$ \\
660 & $260\times 20\times 5$ \\
$-2460$ & $81120\times 4\times 2\times 2$ \\
9184 & $115500\times 28$\\
$-34272$ &  $25104840\times 30\times 3$\\
$127908$ & $924437696\times 4\times 4\times 4$\\
$-477356$ &   $1332657200\times 20\times 2\times 2$\\
$1781520$ &  $28009347406480\times 2$\\
$-6648720$ &  $25020857770200\times 20\times 4\times 2$\\
$24813364$ &  $3937737813077376\times 4\times 2$\\
$-92604732$ & $21266991873333180\times 20\times 4\times 4$\\
$345605568$ & $4788485135078294496\times 12\times 6$
\end{tabular}
\end{table}
For all of the examples in the above table, $-1$ and the roots of the polynomial $F_s(t)$ generate the full group of units
for the ring of algebraic integers of the corresponding field, except for $s=4$, where they generate a subgroup of index 40.
In this case, $s^2+2=18$ is not squarefree, and the unit $17+4\sqrt{18}=(s^2+1) + s\sqrt{s^2+2}$ is the fourth power 
of the fundamental unit $1+\sqrt{2}$ of the quadratic subfield.

The growth of the class number can be made explicit. Since $K_s/k_s$ is ramified at 2, the class number of $k_s$ divides the class number
of $K_s$. Since the class number of $\mathbb Q(\sqrt{s^2+2})$ goes to $\infty$ as $s\to \infty$ (this can be made explicit with at most one 
possible exception;
see \cite{MollWill}), the class number of $K_s$ also goes to $\infty$. However, the potential effect of Siegel zeroes for quadratic fields
can be overcome since we have a quartic field, so we obtain a result with no exceptions.

\begin{proposition}\label{classbd} Assume $s^2+2$ is squarefree and $3s^2-4s+4$ is a square. Let $h$ be the class number of $K_s$ and let
$h_2$ be the class number of $k_s$. If $|s|\ge 10^5$, then
$$
\frac{h}{h_2} \ge  \frac{1}{450} \;\frac{s^2+2}{\log^2(|s|) \log^2(8(s^2+2)/\pi)}.
$$
\end{proposition}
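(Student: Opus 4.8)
The plan is to compare the analytic class number formulas for $K_s$ and for its quadratic subfield $k_s=\mathbb Q(\sqrt{s^2+2})$. Factoring the Dedekind zeta functions as $\zeta_{K_s}(z)=\zeta(z)L(z,\chi)L(z,\chi^2)L(z,\chi^3)$ and $\zeta_{k_s}(z)=\zeta(z)L(z,\chi^2)$, where $\chi$ is an order-$4$ character attached to $K_s$ and $\chi^3=\bar\chi$, and taking residues at $z=1$, the common factor $\operatorname{Res}_{z=1}\zeta(z)=1$ and the quadratic factor $L(1,\chi^2)$ cancel. Since $K_s$ is totally real of degree $4$, the class number formula gives residue $8hR/\sqrt{D_{K_s}}$, while for $k_s$ it gives $2h_2\log\epsilon/\sqrt{d_{k_s}}$, and the ratio of these residues is $L(1,\chi)L(1,\chi^3)=|L(1,\chi)|^2$. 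Using $D_{K_s}=2^8(s^2+2)^3$ (proved above), $d_{k_s}=4(s^2+2)$, and hence $\sqrt{D_{K_s}}/\sqrt{d_{k_s}}=8(s^2+2)$, this rearranges to the exact identity
$$\frac{h}{h_2}=\frac{2(s^2+2)\,|L(1,\chi)|^2\,\log\epsilon}{R}.$$
This is precisely where dividing by $h_2$ pays off: the Siegel-zero-prone quadratic value $L(1,\chi^2)$ has disappeared, leaving only the complex $L$-value $|L(1,\chi)|^2$.

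Next I would bound the two remaining quantities. For the regulator, the computation in the proof of Theorem~\ref{main} shows $R\le R'<9\log^2|s|\,\log\epsilon$ for $|s|\ge 10^5$ (inequality (\ref{9ineq}), using $[E:U]\ge 1$), so $\log\epsilon/R>1/(9\log^2|s|)$. For the conductor of $\chi$, the conductor-discriminant formula together with $D_{K_s}=2^8(s^2+2)^3$ and $f_{\chi^2}=4(s^2+2)$ forces $f_\chi=8(s^2+2)$, which is exactly the quantity appearing inside the logarithm in the statement.

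The crux is then an effective lower bound of the shape $|L(1,\chi)|\ge \frac{1}{10\,\log(f_\chi/\pi)}=\frac{1}{10\,\log(8(s^2+2)/\pi)}$. Because $\chi$ has order $4$ it is not a real character, so $L(z,\chi)$ has no Siegel zero and a lower bound of this kind is genuinely effective with explicit constants; I would invoke an explicit minoration of Louboutin type for $L(1,\chi)$ of a primitive complex character (this is the content of the remark that passing to the quartic field lets us overcome the quadratic Siegel-zero obstruction). Substituting the regulator bound and the squared $L$-value bound into the displayed identity gives
$$\frac{h}{h_2}>2(s^2+2)\cdot\frac{1}{100\log^2(8(s^2+2)/\pi)}\cdot\frac{1}{9\log^2|s|}=\frac{1}{450}\,\frac{s^2+2}{\log^2|s|\,\log^2(8(s^2+2)/\pi)},$$
which is the claim. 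The main obstacle is pinning down the $L$-value bound with a constant good enough to yield the stated $1/450$: I must locate (or derive) an explicit lower bound for $|L(1,\chi)|$ in exactly the normalization $\log(f_\chi/\pi)$, confirm $L(1,\chi)\neq 0$, and verify that its constant is at least $\tfrac{1}{10}$ for all conductors $f_\chi=8(s^2+2)$ arising from $|s|\ge 10^5$. Everything else is bookkeeping with quantities already computed in the earlier sections.
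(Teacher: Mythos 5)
Your proposal is correct and follows essentially the same route as the paper: the same factorization of the residue formula with cancellation of $L(1,\chi^2)=2h_2\log\epsilon/\sqrt{d_{k_s}}$, the same regulator bound $R/\log\epsilon\le 9\log^2|s|$ from inequality (\ref{9ineq}), the same conductor computation $f_\chi=8(s^2+2)$, and the same constants. The lower bound you flag as the remaining obstacle is exactly the cited result of Louboutin \cite{Lou}, namely $|L(1,\chi)|\ge 1/(10\log(c/\pi))$ for a non-quadratic primitive character of conductor $c\ge 90000$, which applies here since $8(s^2+2)\ge 8\cdot 10^{10}$ when $|s|\ge 10^5$.
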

\begin{proof} Let $\chi$ be a quartic Dirichlet character associated with $K_s$. Then
$$
\frac{8hR}{\sqrt{D_{K_s}}} = L(1, \chi) L(1,\chi^2) L(1, \chi^3).
$$
We have
$$
L(1, \chi^2)=\frac{2h_2\log \epsilon}{\sqrt{d_{k_s}}}.
$$
Louboutin \cite{Lou} has shown that for a non-quadratic primitive Dirichlet character $\chi$ of conductor $c\ge 90000$,
$$
| L(1,\chi)|\ge \frac{1}{10\log(c/\pi)}.
$$
Therefore,
$$
h\ge \frac{1}{400}\left(\frac{D_{K_s}}{d_{k_s}}\right)^{1/2}\;\frac{\log \epsilon}{R} \frac{1}{\log^2(c/\pi)},
$$
where $c=8(s^2+2)$. By Equation (\ref{9ineq}), $R/\log \epsilon\le R'/\log\epsilon \le 9\log^2|s|$.
Putting these together, we obtain the result. \end{proof}
It follows that $h>1$ when $|s|\ge 10^5$. Since we have computed the class number of $K_s$ for $|s|<10^5$, 
we obtain as a corollary that $K_s$ has class number 1 only for $s=4$. 

St\'ephane Louboutin pointed out to us the following:  Let $\chi$ be an even
Dirichlet character of conductor $f>1$. Then there is the upper bound (see \cite{Lou1})
$$
| L(1,\chi)| \le \frac12\left(c+ \log f\right),
$$
where $c= 2+\gamma - \log(4\pi)\approx 0.04619\cdots$ (and where $\gamma = .577\cdots$).
Let $K$ be a totally real quartic field of conductor $f_K$ corresponding to the quartic character $\chi_K$,
and let $k$ be its quadratic subfield. Then
$$
h_K/h_k = \frac{f_K}{4R/\log\epsilon} | L(1, \chi_K)|^2 \le \left(\frac{c+\log(f_K)}{2\log(f_K/16)}\right)^2 f_K,
$$
where we have used Proposition \ref{RvsD} to bound $R/\log\epsilon$.
Therefore, $h_K/h_k <f_K$ when $f_K>256 e^c \approx 268.01$, and a quick search using GP/PARI \cite{pari}
shows that $h_K<f_K$ also when $f_K\le 268$. Since $f_K=8(s^2+2)$ in the situation of Proposition \ref{classbd},
we see that the lower bound estimate given there is, up to log factors, the correct order of magnitude.

\section{Back to cubics}

The construction of $F_s(t)$ was inspired by the cubic case. See \cite{Balady}, which starts with the element
$$
\begin{pmatrix} f & -h\\ (f^2+g^2-fg)/h & -g\end{pmatrix} \in \text{PGL}_2(\mathbb Q),
$$
where $f, g, h$ can be taken to be distinct integers with $h\ne 0$. 
Assume that the associated linear transformation gives the action of a Galois group on
a number $\theta$, and assume that $\theta$ has norm 1 to $\mathbb Q$. Then we obtain
the equation
$$
\theta \cdot \frac{f\theta -h}{\theta (f^2+g^2-fg)/h - g} \cdot \frac{g\theta -h}{\theta (f^2+g^2-fg)/h - f} = 1,
$$
which can be rearranged to say that $\theta$ is a root of
$$
t^3 + \frac{3(f^2+g^2-fg)-\lambda h(f+g)}{h^2} t^2 + \lambda t-1,
$$
where $\lambda = (f^3+g^3+h^3)/(fgh)$.
If $h=1$ and $\lambda\in \mathbb Z$, we have a polynomial with integral coefficients. Therefore, we want
integral points $(x, y, 1)$ on the elliptic surface $x^3+y^3+1=\lambda xy$, which is the elliptic modular surface $X(3)$.

Following the lead of the quartic case, we look at the  singular fiber $\lambda = 3$. If $f$ is an integer,
we need $g$ to be a root of
$$
X^3-3fX+f^3+1=(X+f+1)(X^2-(f+1)X+f^2-f+1).
$$
When $f\ne 1$, the second factor is irreducible over $\mathbb Q$, so we must have $g=-f-1$. 
This yields the family of polynomials
$$
G_f(t)=t^3+(9f^2+9f+6)t^2+3t-1,
$$
which could be regarded as the analogue of our family of quartic polynomials. This cubic family appears in \cite{Kishi}.

 Let $r$ be a root of the polynomial
$x^3+(3f+3)x^2+3fx-1$. As pointed out in \cite{Kishi}, $-r^2-r$ is a root of $G_f(t)$. This can easily be verified by
representing $r$ by the matrix $A=\begin{pmatrix} 0 & 0 & 1\\ 1 & 0 & -3f-3\\ 0 & 1 & -3f\end{pmatrix}$,
then computing the characteristic polynomial of $-A^2-A$.
The fields obtained from the polynomials $G_f(t)$ are some of Shanks's ``simplest cubic fields.''
However, $-1$ and the roots of $G_f(t)$ generate a subgroup of index 3 in the group of units generated by the roots
of the polynomial $x^3+(3f+3)x^2+3fx-1$. 

In the cubic case, there are also many families of polynomials corresponding to curves on the surface $X(3)$ that do not
lie in the singular fiber $\lambda=3$ (see \cite{Balady}). 
It would be interesting to find similar families in the quartic case.

\subsection*{Acknowledgment.} The authors thank St\'ephane Louboutin for several helpful suggestions.


\begin{thebibliography}{14}

\bibitem{Balady} Steve Balady, ``Families of cyclic cubic fields,'' {\it J. Number Theory} 167 (2016), 394--406.

\bibitem{BergeM} Anne-Marie Berg\'e and Jacques Martinet,  ``Sur les minorations g\'eom\'etriques des r\'egulateurs,''
  {\it S\'eminaire de Th\'eorie des Nombres, Paris 1987--88}, Progr. Math. 81, Birkh\"auser Boston, Boston, MA,
  1990, pp. 23--50.

\bibitem{Connell}
Ian Connell, Addendum to a paper of K. Harada and M.-L. Lang: ``Some elliptic curves arising from the Leech lattice,''
{\it J. Algebra} 125 (1989), no. 2, 298--310; 
{\it J. Algebra}, 145(2):  463--467, 1992. 

\bibitem{Gras} Marie-Nicole Gras, ``Table num\'erique du nombre de classes et des unit\'es des extensions cycliques r\'eelles 
de degr\'e 4 de $\mathbb Q$," {\it Publ. Math. Besan\c con}, fasc. 2, 1977/78, pp. 1-26, 153.
\bibitem{Lang} Serge Lang, {\it Algebraic number theory}, Graduate Texts in Math. 110,  Springer-Verlag, New York, 1986.

\bibitem{JoyeQ} Marc Joye and Jean-Jacques Quisquater, ``Hessian elliptic curves and side-channel attacks,'' 
In {\it Cryptographic hardware and embedded systems--CHES
2001 (Paris)}, volume 2162 of Lecture Notes in Comput. Sci., 
Springer, Berlin, 2001, pp. 402--410.

\bibitem{Kishi} Yasuhiro Kishi, 
   ``A family of cyclic cubic polynomials whose roots are systems of
   fundamental units,'' {\it J. Number Theory} 102 (2003), 90--106.

\bibitem{Lecacheux1} Odile Lecacheux,
  ``Units in number fields and elliptic curves,''
 {\it Advances in number theory, Kingston, ON, 1991},
Oxford Univ. Press, New York, 1993, pp. 293--301.

\bibitem{Lecacheux2} Odile Lecacheux, 
``Familles de corps de degr\'e $4$ et $8$ li\'ees \`a la courbe modulaire
   $X_1(16)$,''
{\it S\'eminaire de Th\'eorie des Nombres, Paris, 1991--92},
Progr. Math., vol. 116, Birkh\"auser Boston, Boston, MA,
 1993, pp. 89--105.

\bibitem{Lou1}
St\'ephane Louboutin, ``Majorations explicites de $\vert L(1,\chi )\vert$,''
{\it C.\ R.\ Acad.\ Sci.\ Paris S\'er. I Math.} 316 (1993), 11--14.

\bibitem{Lou} St\'ephane R. Louboutin, ``An explicit lower bound on moduli of Dirichlet $L$-functions at
   $s=1$,''
   {\it J. Ramanujan Math. Soc.} 30 (2015), no. 1, 101--113.

\bibitem{MollWill} Richard A. Mollin and Hugh C. Williams, 
   ``Solution of the class number one problem for real quadratic fields
   of extended Richaud-Degert type (with one possible exception),''
   {\it Number theory, Banff, AB, 1988}, de Gruyter, Berlin, 1990, pp. 417--425.

\bibitem{pari} The PARI Group, Bordeaux. GP/PARI version 2.7.3, 2015. http://pari.
math.u-bordeaux.fr/.

\bibitem{WashQ} Lawrence C. Washington,  ``A family of cyclic quartic fields arising from modular curves,''
 {\it  Math. Comp.} 57 (1991), 763--775.

\bibitem{WashCycl} Lawrence C. Washington, {\it Introduction to cyclotomic fields},
  Graduate Texts in Math. 83, 
 Springer-Verlag, New York, 1982.
\end{thebibliography}
\end{document}